\documentclass{amsart}
\usepackage{amsmath}
\usepackage{amsfonts}
\usepackage{amssymb}
\usepackage{graphicx}
\usepackage{float, verbatim}
\usepackage{enumerate}
\usepackage{color}
\usepackage{hyperref}

\hoffset=-1.3cm\voffset=-1.0cm
\setlength{\textwidth}{14.6cm}
\setlength{\textheight}{24cm}

\newcommand{\tr}{\mathbb{T}\mathrm{r}}
\newcommand{\adj}{\mathbb{A}\mathrm{dj}}
\newtheorem{thm}{Theorem}[section]

\newtheorem{cor}[thm]{Corollary}
\newtheorem{defn}[thm]{Definition}

\newtheorem{rem}[thm]{Remark}

\newtheorem{example}[thm]{Example}

\begin{document}
	
	\title{On generalized trigonometric functions and series of rational functions}
	
	\author{Han Yu}
	\address{Han Yu\\
		School of Mathematics \& Statistics\\University of St Andrews\\ St Andrews\\ KY16 9SS\\ UK \\ }
	\curraddr{}
	\email{hy25@st-andrews.ac.uk}
	\thanks{}
	
	\subjclass[2010]{26C15;	33B10}
	
	\keywords{rational functions, generalized trigonometric functions}
	
	\date{}
	
	\dedicatory{hy25@st-andrews.ac.uk}

\begin{abstract}
Here we introduce a way to construct generalized trigonometric functions associated with any complex polynomials, and the well known trigonometric functions can be seen to associate with polynomial $x^2-1$. We will show that those generalized trigonometric functions have algebraic identities which generalizes the well known $\sin^2(x)+\cos^2(x)=1$. One application of the generalized trigonometric functions is evaluating infinite series of rational functions.
\end{abstract}

\maketitle
\section{Introduction}

Trigonometric functions are very commonly used in mathematics. The Euler's identity tells that for all $x\in\mathbb{C}$:
\[
\sin(x)=\frac{e^{i x}-e^{-i x}}{2i} \text{ and } \cos(x)=\frac{e^{i x}+e^{-i x}}{2}.
\]
We see that the trigonometric functions $\sin(x)$ and $\cos(x)$ are just certain linear combinations of exponential functions. From here it is a natural question to ask whether we can find other linear combinations of exponential functions to obtain some functions that are in some sense similar to trigonometric functions. We observe that in the above formulae those exponential functions have exponents $ix, -ix$. The factors $\pm i$ are roots of polynomial $x^2+1=0$ in $\mathbb{C}$. This motivates the following construction:

Let $P\in\mathbb{C}[x]$ be a polynomial of degree $m\geq 1$. Then let $r_1,r_2,\dots,r_m$ be the roots of $P$. We shall construct $m$ functions:
\[
S^{P}_l:\mathbb{C}\to\mathbb{C}, l\in\{0,1,2,\dots,m-1\}.
\] 
The functions have form $S^{P}_l(x)=\sum_{j=1}^{m}T^{P}_{l,j}e^{-i r_j x}$ with $T^{P}_{l,j}\in\mathbb{C}$. 

Now we are going to describe the coefficients $T^{P}_{l,j}$. For any choice of $l$ different numbers say $s_1,...,s_l$ from $1,2...m-1$ we can assign the product
\[
r_{s_1}r_{s_2}...r_{s_l},
\]
and we can call such a product an \emph{unordered $l$-tuple}. For example if $P$ has $5$ roots, then all the unordered $3$-tuples are 
\[
r_1r_2r_3,r_1r_2r_4,r_1r_2r_5,r_2r_3r_4,r_2r_3r_5,r_3r_4r_5,r_1r_3r_4,r_1r_3r_5,r_1r_4r_5,r_2r_4r_5.
\]
Further more we can say \emph{$l$-tuple with $j$} as a unordered $l$-tuple with index $j$, for example if $P$ has $5$ roots as above we have all $3$-tuples with $2$ are
\[
r_1r_2r_3,r_1r_2r_4,r_1r_2r_5,r_2r_3r_4,r_2r_3r_5,r_2r_4r_5.
\]

Now we can define:
\[
T^P_{l,j}=\sum_{a\in\{\text{All l-tuple with j}\}}a,
\]
where $0$-tuples are defined to be the coefficient of the highest term of polynomial $P$. Later we can just write the sum without explicitly writing down the terms and there is no confusion.
\[
T^P_{l,j}=\sum_{\text{All l-tuple with j}}.
\]
For example,when  $P$ has $5$ roots, then:
\[
T^P_{3,2}=r_1r_2r_3+r_1r_2r_4+r_1r_2r_5+r_2r_3r_4+r_2r_3r_5+r_2r_4r_5.
\] 

The following equation is also clear:
\[
\sum_{\text{All l-tuple without j}}=\sum_{a\in\{\text{All unordered l-tuples}\}}a-\sum_{a\in\{\text{All l-tuple with j}\}}a.
\]

We now have the following definition:
\begin{defn}
 For any polynomial $P\in\mathbb{C}[x]$.	The functions $S^{P}_{l},l\in\{0,1,\dots,m-1\}$ constructed as above are called generalized trigonometric functions associated with polynomial $P$. Namely:
 
 \[
 S^{P}_l(x)=\sum_{j=1}^{m}T^{P}_{l,j}e^{-i r_j x},
 \]
 where we have:
 \[
 T^P_{l,j}=\sum_{\text{All l-tuple with j}}.
 \]
\end{defn}
\section{Properties of generalized trigonometric functions}

Throughout this section $P=\sum_{k=0}^{m}a_k x^k$ is a fixed complex polynomial with roots (written with multiplicity) $r_1,\dots, r_m$. We can consider $a_m=1$ by rescaling the polynomial $P$. Such rescaling will not change the roots. The first result tells us about the Taylor expansion of functions $S^{P}_{l}$.
\begin{thm}
	$S^P_l(x)=\sum_{k=0}^{\infty} b_kx^k$, for $x\in\mathbb{C}$ and $k!b_k\in\mathbb{Z}[i,a_0,a_2,...,a_m]$. 
\end{thm}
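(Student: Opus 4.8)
The plan is to pass to power series and convert the arithmetic assertion into a statement about symmetric functions of the roots $r_1,\dots,r_m$. Since $x\mapsto e^{-ir_jx}$ is entire for every $j$ and $S^P_l$ is a finite $\mathbb{C}$-linear combination of these, $S^P_l$ is entire and hence equals its Taylor series $\sum_{k\ge 0}b_kx^k$ on all of $\mathbb{C}$. Expanding term by term---legitimate since the sum over $j$ is finite---gives
\[
S^P_l(x)=\sum_{j=1}^m T^P_{l,j}\sum_{k\ge 0}\frac{(-ir_j)^k}{k!}x^k=\sum_{k\ge 0}\frac{(-i)^k}{k!}\Bigl(\sum_{j=1}^m T^P_{l,j}r_j^k\Bigr)x^k,
\]
so $k!\,b_k=(-i)^k\,\Sigma_{l,k}$, where $\Sigma_{l,k}:=\sum_{j=1}^m T^P_{l,j}r_j^k$. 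As $(-i)^k\in\mathbb{Z}[i]$, the theorem reduces to showing that $\Sigma_{l,k}$ is a polynomial with integer coefficients in the coefficients $a_0,\dots,a_m$ of $P$.

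The key step is to recognize $\Sigma_{l,k}$ as a symmetric polynomial in the roots with integer coefficients. For $l\ge 1$, pairing each $l$-element subset $S\subseteq\{1,\dots,m\}$ containing $j$ with the $(l-1)$-element subset $S\setminus\{j\}$ of the remaining indices yields the identity
\[
T^P_{l,j}=r_j\,e_{l-1}(r_1,\dots,\widehat{r_j},\dots,r_m),
\]
where $e_s$ denotes the $s$-th elementary symmetric polynomial and the hat means the entry $r_j$ is omitted; for $l=0$ one has instead $T^P_{0,j}=a_m=1$ by the stated convention. In either case $\Sigma_{l,k}$ is a sum of $m$ terms, each of which is an honest polynomial in $r_1,\dots,r_m$ with integer coefficients ($r_j^{k+1}e_{l-1}(r_1,\dots,\widehat{r_j},\dots,r_m)$ if $l\ge 1$, and $r_j^k$ if $l=0$), and any permutation of the roots merely permutes these $m$ terms among themselves. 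Hence $\Sigma_{l,k}$ is an element of $\mathbb{Z}[r_1,\dots,r_m]$ that is invariant under all permutations of $r_1,\dots,r_m$.

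By the fundamental theorem of symmetric polynomials, $\Sigma_{l,k}$ is then a polynomial with integer coefficients in the elementary symmetric functions $e_1(r),\dots,e_m(r)$, and Vieta's formulas, together with the normalization $a_m=1$, give $e_s(r)=(-1)^s a_{m-s}$. Therefore $\Sigma_{l,k}\in\mathbb{Z}[a_0,\dots,a_m]$, and combined with the prefactor $(-i)^k\in\mathbb{Z}[i]$ this gives $k!\,b_k\in\mathbb{Z}[i,a_0,\dots,a_m]$, as required.

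I do not anticipate a real obstacle: the content of the proof is the symmetrization identity $T^P_{l,j}=r_je_{l-1}(r_1,\dots,\widehat{r_j},\dots,r_m)$, which is exactly what makes the whole $k$-th coefficient symmetric in the roots; the remaining points needing (minor) care are the separate, trivial treatment of $l=0$ and the observation that counting the roots ``with multiplicity'' is harmless, since symmetric-function arguments are insensitive to repeated entries. If an explicit formula is desired, extracting the coefficient of $t^{l-1}$ from $\bigl(\prod_{j}(1+r_jt)\bigr)\bigl(\sum_{j}r_j^{k+1}(1+r_jt)^{-1}\bigr)$ gives, for $l\ge 1$, $\Sigma_{l,k}=\sum_{n=0}^{l-1}(-1)^n e_{l-1-n}\,p_{k+1+n}$ with $p_r:=\sum_j r_j^r$, after which Newton's identities render every power sum $p_r$ an explicit $\mathbb{Z}$-polynomial in $e_1,\dots,e_{\min(r,m)}$, hence in the $a_\bullet$. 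The same conclusion can also be read off from the fact that $S^P_l$ is annihilated by the operator $P\!\left(i\,\tfrac{d}{dx}\right)$, which produces a monic linear recurrence for the numbers $k!\,b_k$ with coefficients in $\mathbb{Z}[i,a_0,\dots,a_{m-1}]$.
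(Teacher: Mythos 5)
Your proof is correct and follows essentially the same route as the paper: expand the exponentials into power series, read off $k!\,b_k=(-i)^k\sum_j T^P_{l,j}r_j^k$, observe that this sum is symmetric in the roots, and conclude via the fundamental theorem of symmetric polynomials and Vieta. The paper asserts the symmetry in one sentence, whereas you justify it explicitly through the identity $T^P_{l,j}=r_j\,e_{l-1}(r_1,\dots,\widehat{r_j},\dots,r_m)$; this is a welcome filling-in of detail rather than a different argument.
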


\begin{proof}
	We can use the power series for function 
	\[
	e^{-ir_j x}=\sum_{k=0}^{\infty}\frac{(-ir_jx)^k}{k!}=\sum_{k=0}^{\infty}\frac{(-ir_j)^k}{k!}x^k
	\]
	
	then we have
	\begin{eqnarray*}
		S^P_l(x)&=&\sum_{j=1}^{m}T^P_{l,j} e^{-ir_jx}\\ \\
		&=&\sum_{k=0}^{\infty}\sum_{j=1}^{m} \frac{(-i)^k}{k!}T^P_{l,j}r^k_jx^k
	\end{eqnarray*}
	so $b_k$ in the statement of theorem can be computed
	
	\[
	b_k=\sum_{j=1}^{m} \frac{(-i)^k}{k!}T^P_{l,j}r^k_j.
	\]
	
	The sum over $j$ of $T^P_{l,j}r^k_j$ is symmetric among all the roots $r_1,r_2,...,r_m$ therefore $k! b_k$ can be expressed as a integer coefficient polynomial with variables $i, a_0,\dots, a_m$.
\end{proof}

Just as $\sin^2(x)+\cos^2(x)=1$. The generalized trigonometric functions also have such algebraic identities. 

\begin{thm}\label{GA}
	There is a polynomial $F\in\mathbb{C}[x_1,x_2,...,x_m]$ such that all the coefficients of $F$ are algebraic over $a_0,...,a_m$ and $F(S^P_0(x),...,S^P_{m-1}(x))=0$ for all $x\in\mathbb{C}$.
\end{thm}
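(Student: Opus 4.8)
The plan is to transfer the identity to the $m$ exponentials $u_j(x):=e^{-ir_jx}$ that build the $S^P_l$. Put $\mathbf u(x)=(u_1(x),\dots,u_m(x))^{\mathsf T}$ and $\mathbf S(x)=(S^P_0(x),\dots,S^P_{m-1}(x))^{\mathsf T}$, so that by definition $\mathbf S(x)=T\,\mathbf u(x)$ with $T=\bigl(T^P_{l,j}\bigr)_{0\le l\le m-1,\ 1\le j\le m}$; the first job is to understand this coefficient matrix. Since $T^P_{l,j}=r_j\cdot e_{l-1}(\text{roots other than }r_j)$ for $l\ge 1$ (and $T^P_{0,j}=1$), the standard identity $e_{l-1}(\dots\widehat{r_j}\dots)=\sum_{k\ge 0}(-r_j)^k e_{l-1-k}$ — with $e_s$ the $s$-th elementary symmetric function of all the roots, so $e_s=\pm a_{m-s}$ — gives $T^P_{l,j}=\sum_{k=1}^{l}(-1)^{k-1}e_{l-k}\,r_j^{\,k}$. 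Hence $T=L\,V$, where $V=(r_j^{\,l})_{0\le l\le m-1,\ 1\le j\le m}$ is a Vandermonde matrix and $L$ is lower triangular with diagonal entries $\pm1$; in particular $\det T=\pm\prod_{i<j}(r_j-r_i)$, and every entry of $T$ lies in the splitting field $K$ of $P$ over $\mathbb Q(a_0,\dots,a_m)$, a finite (so algebraic) extension. If $P$ has a repeated root, $T$ is singular, any nonzero $c$ with $c^{\mathsf T}T=0$ yields $\sum_l c_l S^P_l\equiv 0$, and $F(\mathbf x)=\sum_l c_l x_l$ already does the job; so from now on assume the roots are distinct, whence $T$ is invertible over $K$ and $\mathbf u(x)=T^{-1}\mathbf S(x)$ with $T^{-1}$ again defined over $K$.

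It remains to find a (preferably nonzero) polynomial vanishing on the curve $\{\mathbf u(x):x\in\mathbb C\}\subseteq(\mathbb C^\times)^m$ and transport it through the linear isomorphism $T$. For every integer vector $c=(c_1,\dots,c_m)$ with $\sum_j c_jr_j=0$ we have $\prod_j u_j(x)^{c_j}=e^{-i(\sum_j c_jr_j)x}=1$ identically, a Laurent-monomial relation; these relations cut out a subtorus $H\le(\mathbb C^\times)^m$, defined over $\mathbb Q$, of dimension $d:=\dim_{\mathbb Q}(\mathbb Q r_1+\dots+\mathbb Q r_m)$. Writing each $r_j$ as an integer combination of a $\mathbb Q$-basis $\sigma_1,\dots,\sigma_d$ of their span, and using only linear independence of the functions $\{e^{\lambda x}\}_{\lambda\in\mathbb C}$ over $\mathbb C$, one checks that $\{(e^{-i\sigma_1x},\dots,e^{-i\sigma_dx}):x\in\mathbb C\}$ is Zariski dense in $(\mathbb C^\times)^d$, hence its image $\{\mathbf u(x)\}$ (under the monomial map attached to the integer matrix expressing the $r_j$ in the $\sigma_k$) is Zariski dense in $H$. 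Consequently $\{\mathbf S(x):x\in\mathbb C\}$ is Zariski dense in a $d$-dimensional subvariety of $\mathbb C^m$ defined over $K$, and its ideal in $K[x_1,\dots,x_m]$ supplies $F$ — a \emph{nonzero} such $F$ as soon as $d<m$. A clean sufficient condition is $a_{m-1}=0$, i.e.\ $r_1+\dots+r_m=0$: then the relation $\prod_j u_j(x)=1$ gives the explicit degree-$m$ choice
\[
F(x_1,\dots,x_m)=\prod_{j=1}^{m}\Bigl(\textstyle\sum_{l=0}^{m-1}(T^{-1})_{j,l}\,x_l\Bigr)-1,
\]
whose leading form is a product of $m$ nonzero linear forms; for $P=x^2-1$ this is $\tfrac14(x_1^2-x_2^2)-1$, which is $\sin^2x+\cos^2x=1$ under $S^P_0=2\cos x$, $S^P_1=-2i\sin x$.

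The heart of the matter, and the main obstacle, is the third step: controlling the Zariski closure of the transcendental curve $x\mapsto(e^{-ir_1x},\dots,e^{-ir_mx})$ and showing it is exactly the $d$-dimensional set above rather than something larger. This is where the arithmetic of the roots — their $\mathbb Q$-linear dependences — is decisive, and it is also where the genuine content of the statement lives: the same analysis shows that a nonzero $F$ exists precisely when $d<m$, i.e.\ when $r_1,\dots,r_m$ are $\mathbb Q$-linearly dependent (a condition one can always force by translating $P$ to make $a_{m-1}=0$, which only multiplies every $S^P_l$ by a common exponential factor $e^{icx}$ and so does not affect the existence of a polynomial identity among them).
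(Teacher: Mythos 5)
Your route is genuinely different from the paper's, and its core computations are sound: the factorization $T=LV$ of the coefficient matrix through the Vandermonde matrix (via $T^P_{l,j}=\sum_{k=1}^{l}(-1)^{k-1}e_{l-k}r_j^k$) is correct, the repeated-root case is handled cleanly by a linear relation, and your identification of the ideal of the curve $x\mapsto(e^{-ir_1x},\dots,e^{-ir_mx})$ with the binomial ideal coming from integer relations $\sum_jc_jr_j=0$ is exactly right --- linear independence of distinct exponentials is all that is needed, and it shows that a nonzero polynomial vanishing on $\{\mathbf{u}(x)\}$ exists if and only if $d<m$. The paper instead works on the other side of the conjugation: it writes $\mathbf{S}'=K\mathbf{S}$, picks a left eigenvector $L$ of $K^m$, and shows via the Jacobi formula that the determinant of a twisted circulant in $f_0=L^{\mathsf T}\mathbf{S}$ and its derivatives is constant in $x$. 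Your approach buys much more: it gives the exact obstruction, whereas the paper's determinant identity gives no control over whether the resulting polynomial is nonzero.

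The gap is your closing sentence. Translating $P$ to kill $a_{m-1}$ does not merely multiply each $S^P_l$ by $e^{icx}$: it replaces $\mathbf{u}$ by $e^{icx}\mathbf{u}$ \emph{and} replaces $T$ by the matrix built from the shifted roots, so the new functions are different linear combinations of different exponentials, and a polynomial identity among them yields nothing for the original $S^P_l$. Indeed the gap is unfixable, because your own analysis refutes the theorem in its intended (nonzero-$F$) form: for $P=(x-1)(x-\sqrt{2})$ the roots are $\mathbb{Q}$-linearly independent, so $e^{-ix}$ and $e^{-i\sqrt{2}x}$ are algebraically independent, hence so are $S^P_0=e^{-ix}+e^{-i\sqrt{2}x}$ and $S^P_1=e^{-ix}+\sqrt{2}e^{-i\sqrt{2}x}$, and no nonzero $F$ exists (only $F=0$ satisfies the literal statement). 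The paper's proof quietly degenerates at the same point: when the $r_j$ are $\mathbb{Q}$-linearly independent the admissible eigenvectors $L$ force $\det M$ to be the zero polynomial in the $S^P_l$, so the produced identity is $0=0$; only in cases such as $P=x^m-1$, where $K^m$ is scalar and $L$ is free, does either argument produce a genuine relation. The correct statement, which your argument actually proves, is the equivalence: a nonzero $F$ with coefficients algebraic over $a_0,\dots,a_m$ exists if and only if $r_1,\dots,r_m$ are $\mathbb{Q}$-linearly dependent (the repeated-root case being an instance of this). You should state it that way and drop the translation remark.
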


To prove this we need to work on linear transformations of $S^P_l(x)$, that is, given $m$ complex numbers $L_j,j=0,1...m-1$ we can define a function 
\[
L(x)=\sum_{j=0}^{m-1}L_jS^P_j(x),
\]
which is a linear combination of all functions $S^P_l$. Considering such linear combinations is very helpful, and indeed for a non degenerate matrix $L_{i,j},i,j=0,...,{m-1}$ the following functions: 

\[LS_i(x)=\sum_{j=0}^{m-1}L_{i,j}S^P_j(x),i=0...m-1\]
are in some sense equivalent to $S^P_l(x)$.

\begin{proof}[Proof of theorem \ref{GA}]
	As above, given $m$ complex numbers $L_j,j=0,1,...,m-1$ we can define a function 
	\[
	f_0(x)=L(x)=\sum_{j=0}^{m-1}L_jS^P_j(x),
	\]
	then for any $l=1,2,...,m-1$ we define
	\[
	f_l(x)=f'_{l-1}(x).
	\]
	
	Now if $\lambda f_0= f'_{m-1}$ for some number $\lambda\neq 0$ we can construct a matrix
	
	\[
	M=
	\begin{bmatrix}
	f_0 & f_1 & f_2 & \dots & f_{m-2} & f_{m-1}\\
	f_1 & f_2 & f_3 & \dots & f_{m-1} & \lambda f_0\\
	f_2 & f_3 & f_4 & \dots & \lambda f_{0} & \lambda f_1\\
	\vdots & \vdots & \vdots & \ddots  & \vdots & \vdots\\
	f_{m-1} & \lambda f_{0} & \lambda f_{1} & \dots &  \lambda f_{m-3} & \lambda f_{m-2}\\
	\end{bmatrix}
	\]
	
	The above matrix is actually a function on variable $x$. Define $D(x)=\det(M)$ which is a smooth function of $x$, then applying Jacobi formula we get:
	\[
	D'(x)=\tr(\adj(M)M'(x)).
	\]
	Here $\tr, \adj$ stand for the trace and adjugate of matrices. 
	Now we see that: 
	\begin{eqnarray*}
	M'&=&
	\begin{bmatrix}
	f'_0 & f'_1 & f'_2 & \dots & f'_{m-2} & f'_{m-1}\\
	f'_1 & f'_2 & f'_3 & \dots & f'_{m-1} & \lambda f'_0\\
	f'_2 & f'_3 & f'_4 & \dots & \lambda f'_{0} & \lambda f'_1\\
	\vdots & \vdots & \vdots & \ddots  & \vdots & \vdots\\
	f'_{m-1} & \lambda f'_{0} & \lambda f'_{1} & \dots &  \lambda f'_{m-3} & \lambda f'_{m-2}\\
	\end{bmatrix}\\
	&=&
	\begin{bmatrix}
	f_1 & f_2 & f_3 & \dots & f_{m-1} & \lambda f_0\\
	f_2 & f_3 & f_4 & \dots & \lambda f_0 & \lambda f_1\\
	f_3 & f_4 & f_5 & \dots & \lambda f_1 & \lambda f_2\\
	\vdots & \vdots & \vdots & \ddots  & \vdots & \vdots\\
	\lambda f_0 & \lambda f_1 & \lambda f_2 & \dots &  \lambda f_{m-2} & \lambda f_{m-1}\\
	\end{bmatrix}	
	\end{eqnarray*}
	
	Now we have the formula:
	\[
	\adj(M) M=(\det M )I,
	\]
	where $I$ is the identity matrix. And observe that the column vectors of $M'$ is a permutation of column vectors of $M$ with one column multiplied by a number $\lambda$. It is then easy to see that $\adj(M) M'$ is an off diagonal matrix. Therefore we see that:
	\[
	D'(x)=\tr(\adj(M)M'(x))=0.
	\]
	This implies that for any $x\in\mathbb{C}$
	\[
	\det M=\det M(0),
	\]
	and the value $\det M(0)$ is algebraic over $S^P_l(0),l=0...m-1$ which are easily shown to be algebraic over the coefficients $a_k,k=0,...,m$ of polynomial $P$. So it is enough to show that we can choose $L_j,j=0...m-1$ and $\lambda$ algebraic over $a_k$ such that the following relations do hold
	
	\[
	f_l(x)=f'_{l-1}(x),l=1,2...m-1
	\]
	
	\[
	f_0(x)=\lambda^{-1} f'_{m-1}(x).
	\]
	
	Now we study the derivatives of functions $S^P_l(x)$,
	\[
	{S^P_l}'(x)=\sum_{j=1}^{m}T^P_{l,j}(-ir_j) e^{-ir_jx},
	\]
	and the value of $T^P_{l,j}r_j$ can be easily computed as follows. For $l=0$ we have
	\[
	T^P_{0,j}=a_m=1,
	\]
	and so
	\[
	T^P_{0,j}r_j=r_j.
	\]
	For $l=1,2,...,m-2$ we have:
	
	\begin{eqnarray*}
		T^P_{l,j}r_j&=&r_j\sum_{\text{l-tuples with j}}\\ \\
		&=&r_j\left(\sum_{\text{all l-tuples}}-\sum_{\text{l-tuples without j}}\right)\\ \\
		&=&r_j\left((-1)^l a_{m-l}-\sum_{\text{l-tuples without j}}\right)\\ \\
		&=&r_j(-1)^l a_{m-l}-\sum_{\text{l+1-tuples with j}}
	\end{eqnarray*}
	where we have used the fact that the following expression:
	\[
	(-1)^l\sum_{\text{all l-tuples}}
	\]
	with roots of the monic polynomial $P(n)$ gives the coefficient of the term $n^{m-l}$ and the fact that for $l\leq m-2$
	
	\[
	r_j\sum_{\text{l-tuples without j}}=\sum_{\text{l+1-tuples with j}}.
	\]
	Next we consider the case when $l=m-1$. We have
	\begin{eqnarray*}
		T^P_{m-1,j}r_j&=&r_j\sum_{\text{(m-1)-tuples with j}}\\ \\
		&=&r_j\left(\sum_{\text{all m-1-tuples}}-\sum_{\text{m-1-tuples without j}}\right)\\ \\
		&=&r_j\left((-1)^{m-1} a_1-\sum_{\text{(m-1)-tuples without j}}\right)\\ \\
		&=&r_j(-1)^{m-1} a_{1}-\sum_{\text{m-tuples with j}}\\ \\
		&=&r_j(-1)^{m-1} a_{1}-(-1)^m a_0.
	\end{eqnarray*}
	In all we have:
	\[
	T^P_{l,j}r_j = \begin{cases}
	r_j, & \text{for } l=0\\
	r_j(-1)^l a_{m-l}-\sum_{\text{l+1-tuples with j}}, & \text{for } 0<l<m-1\\
	r_j(-1)^{m-1} a_{1}-(-1)^m a_0, & \text{for } l=m-1
	\end{cases}
	\]
	and therefore we have
	\begin{eqnarray*}
		{S^P_l}'(x)&=&\sum_{j=1}^{m}T^P_{l,j}(-ir_j) e^{-ir_jx}\\ \\
		&=&\sum_{j=1}^{m}e^{-ir_jx}
		\begin{cases}
			r_j, & \text{for } l=0\\
			r_j(-1)^l a_{m-l}-\sum_{\text{l+1-tuples with j}}, & \text{for } 0<l<m-1\\
			r_j(-1)^{m-1} a_{1}-(-1)^m a_0, & \text{for } l=m-1
		\end{cases}\\ \\
		&=&
		\begin{cases}
			-iS^P_1(x), & \text{for } l=0\\
			(-1)^{l+1}ia_{m-l}S^P_1(x)+iS^P_{l+1}(x), & \text{for } 0<l<m-1\\
			(-1)^{m}ia_1S^P_1(x)+(-1)^mia_0S^P_0(x), & \text{for } l=m-1
		\end{cases}\\ \\
	\end{eqnarray*}
	which can be expressed in the following matrix form:
	\[
	\begin{bmatrix}
	{S^P_0}' \\
	{S^P_1}' \\
	{S^P_2}' \\
	\vdots \\
	{S^P_{m-1}}' \\
	\end{bmatrix}
	=
	\begin{bmatrix}
	0 & -i & 0 & 0 & \dots & 0 \\
	0 & ia_{m-1} & i & 0 & \dots & 0 \\
	0 & -ia_{m-2} & 0 & i & \dots & 0 \\
	\vdots & \vdots & \vdots & \vdots & \dots & \vdots \\
	0 & (-1)^{m-1}ia_{2} & 0 & 0 & \dots & i \\
	(-1)^mia_0 & (-1)^mia_{1} & 0 & 0 & \dots & 0 \\
	\end{bmatrix}
	\begin{bmatrix}
	S^P_0 \\
	S^P_1 \\
	S^P_2 \\
	\vdots \\
	S^P_{m-1} \\
	\end{bmatrix}
	\]
	We denote the matrix in this equation as $K$. Then we have
	\begin{eqnarray*}
		f_1=f'_0&=&\sum_{j=0}^{m-1}L_j{S^P_j}'(x)\\ \\
		&=&
		\begin{bmatrix}
			L_0 & L_1 & L_2 & \dots & L_{m-1}
		\end{bmatrix}
		K
		\begin{bmatrix}
			S^P_0 \\
			S^P_1 \\
			S^P_2 \\
			\vdots \\
			S^P_{m-1} \\
		\end{bmatrix}
	\end{eqnarray*}
	We can do the above step to find all $f_2,f_3...f_{m-1}$ and the last step will be computing the derivative of $f_{m-1}$. We have
	\[
	f'_{m-1}=
	\begin{bmatrix}
	L_0 & L_1 & L_2 & \dots & L_{m-1}
	\end{bmatrix}
	K^m
	\begin{bmatrix}
	S^P_0 \\
	S^P_1 \\
	S^P_2 \\
	\vdots \\
	S^P_{m-1} \\
	\end{bmatrix}
	\]
	
	and we want the following relation:
	\[
	f'_{m-1}=\lambda^{-1} f_0,
	\]
	which will be satisfied if
	\[
	\begin{bmatrix}
	L_0 & L_1 & L_2 & \dots & L_{m-1}
	\end{bmatrix}
	M^m
	=\lambda
	\begin{bmatrix}
	L_0 & L_1 & L_2 & \dots & L_{m-1}
	\end{bmatrix}
	\]
	this will be satisfied if $\lambda^{-1}$ is an eigenvalue of $M=K^m$. Such $\lambda^{-1}$ exists and is algebraic over the entries of matrix $K$ and therefore also algebraic over $a_0,a_1,...,a_{m-1}$. Whenever $\lambda^{-1}$ is an eigenvalue of $K^m$, we can find non trivial vector 
	
	\[
	\begin{bmatrix}
	L_0 & L_1 & L_2 & \dots & L_{m-1}
	\end{bmatrix}
	\]
	where all the numbers $L_i's$ can be chosen to be algebraic over $a_0,a_1,...,a_{m-1}$, and then the fact mentioned earlier in this proof 
	\[
	\det M=\det M(0)
	\]
	determines an algebraic relation satisfying all the conditions in the statement of this theorem.
\end{proof}
\section{Examples of generalized trigonometric functions}\label{SC}
Throughout this section $m$ is a fixed integer and $P(x)=x^m-1$. Then we write:
\[
\zeta=e^{\frac{2\pi i}{m}}, \eta=\zeta^2=e^{\frac{2\pi i}{m}}.
\]
We can construct functions $S^{P}_l,l\in\{0,1,\dots,m-1\}$. Then we can further obtain the following functions:
\[
S_l(x)=\frac{1}{m}S^{P}_l(i\eta x).
\]
The reason for rescaling will become clear later. We can explicitly write down the functions $S_l$:

	\[
	S_l(x)=\frac{1}{m\eta^l}\sum_{j=0}^{m-1}\zeta^{lj}e^{\eta\zeta^{j}x}.
	\] 

\begin{example}
	If m=1 then $\zeta=1,\eta=-1$ and $S_0=e^{-x}$.
\end{example}

\begin{example}
	If m=2 then $\zeta=-1,\eta=i$ and $S_0=\cos(x),S_1=\sin(x)$
\end{example}

The above two examples justify the rescaling and the name "generalized trigonometric functions" . We can study the Taylor series and algebraic identities as a special case of the general result discussed in the previous section. We will state the results without repeating the proofs.

\begin{thm}[Power series]
	If $l\neq 0$ we have
	\[
	S_l=\frac{1}{\zeta^l}\sum_{k=1}^{\infty}\frac{1}{(km-l)!}x^{km-l}(-1)^k.
	\]
	If $l=0$ we have
	\[
	S_l=\frac{1}{\zeta^l}\sum_{k=0}^{\infty}\frac{1}{(km)!}x^{km}(-1)^k,
	\]
	where we consider $x\in\mathbb{C}$.
\end{thm}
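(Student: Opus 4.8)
The plan is to substitute the power series of each exponential into the explicit formula $S_l(x)=\frac{1}{m\eta^l}\sum_{j=0}^{m-1}\zeta^{lj}e^{\eta\zeta^{j}x}$ recorded just above the theorem, and then to collapse the resulting double sum by a roots-of-unity filter. First I would write $e^{\eta\zeta^{j}x}=\sum_{n\ge 0}\frac{\eta^n\zeta^{jn}}{n!}x^n$; since the sum over $j$ is finite and each exponential series converges absolutely and locally uniformly on $\mathbb{C}$, the order of summation may be exchanged, giving
\[
S_l(x)=\frac{1}{m\eta^l}\sum_{n=0}^{\infty}\frac{\eta^n}{n!}x^n\Bigl(\sum_{j=0}^{m-1}\zeta^{j(n+l)}\Bigr).
\]

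Next I would evaluate the inner sum: since $\zeta$ is a primitive $m$-th root of unity, $\sum_{j=0}^{m-1}\zeta^{j(n+l)}$ equals $m$ when $m\mid n+l$ and $0$ otherwise. Thus only the indices $n\equiv -l\pmod m$ survive; writing $n=km-l$, the requirement $n\ge 0$ forces $k\ge 1$ when $0<l\le m-1$ and permits $k\ge 0$ when $l=0$. Substituting and cancelling the factor $m$ leaves
\[
S_l(x)=\sum_{k}\frac{\eta^{km-l}}{\eta^{l}(km-l)!}x^{km-l}=\sum_{k}\frac{\eta^{km-2l}}{(km-l)!}x^{km-l},
\]
with $k$ ranging over the set just identified.

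Finally I would simplify the coefficient. Recalling that $\eta=e^{\pi i/m}$, so that $\eta^{2}=\zeta$ and $\eta^{m}=e^{\pi i}=-1$, we obtain $\eta^{km-2l}=(\eta^{m})^{k}(\eta^{2})^{-l}=(-1)^{k}\zeta^{-l}$; pulling the constant $\zeta^{-l}$ out of the sum then yields exactly the two stated formulae, for $l=0$ and for $l\ne 0$ respectively. The computation is entirely routine, and it may also be regarded as a specialisation of the general power series expansion established in the previous section. There is no real obstacle beyond two points of bookkeeping: the lower limit of the $k$-summation — whether $n=0$ is admissible, which is precisely where the cases $l=0$ and $l\ne 0$ separate — and the correct reading of $\eta$, since it is the relation $\eta^{m}=-1$ (and not $\eta^{m}=1$) that produces the alternating sign $(-1)^{k}$.
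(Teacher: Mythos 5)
Your computation is correct and is exactly the intended argument: the paper states this theorem without proof, remarking that it is a special case of the Taylor-expansion result of Section 2, whose proof likewise expands each $e^{-ir_jx}$ into its power series and sums over the roots, which for $P(x)=x^m-1$ collapses via the roots-of-unity filter precisely as you describe. You also correctly resolve the paper's typo in the definition of $\eta$ (it must be $\eta=e^{\pi i/m}$, so $\eta^2=\zeta$ and $\eta^m=-1$), which is what produces the sign $(-1)^k$.
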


\begin{thm}[Algebraic identities]\label{AIG}
	For any $m$ being integers larger than $1$, the functions $S_l(x)_{l=0,1,2...,m-1}$ are not algebraically independent in the sense that we can find a non trivial polynomial $F$ in $\mathbb{Z}[x_1,...,x_m]$ of order $m$ such that:
	
	$$F(S_0(x),...,S_{m-1}(x))=0$$ for all $x\in\mathbb{R}$ and therefore for any $x\in\mathbb{C}$.
\end{thm}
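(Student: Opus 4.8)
The plan is to specialise the mechanism behind Theorem~\ref{GA} to $P(x)=x^m-1$ and then keep track of coefficients carefully enough to land inside $\mathbb{Z}[x_1,\dots,x_m]$. The first step is to replace the $S_l$ by the functions $h_l(x):=\zeta^l S_l(x)$. Using the explicit formula $S_l(x)=\frac{1}{m\eta^l}\sum_{j=0}^{m-1}\zeta^{lj}e^{\eta\zeta^j x}$ together with $\eta^2=\zeta$ and $\eta^m=-1$ (consistent with the displayed examples $m=1,2$), one differentiation shows $h_l'=h_{l+1}$ for $0\le l\le m-2$ and $h_{m-1}'=-h_0$; in other words the $h_l$ put us exactly in the setting of the proof of Theorem~\ref{GA} with the \emph{explicit} value $\lambda=-1$. (A short conjugation check also gives $\overline{h_l(x)}=h_l(x)$ for real $x$, so the $h_l$ are real-analytic, but this is not really needed.)

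Taking $f_l:=h_l$ and $\lambda=-1$, I would form the matrix $M(x)$ precisely as in the proof of Theorem~\ref{GA}. Since $m\ge2$, the cyclic shift involved is a non-trivial permutation, so $\adj(M)M'$ is off-diagonal and the Jacobi-formula computation there gives $\det M(x)\equiv\det M(0)$. The key bookkeeping point is that $\det M(x)=Q(h_0(x),\dots,h_{m-1}(x))$, where $Q\in\mathbb{Z}[x_1,\dots,x_m]$ is the determinant of the symbolic matrix whose $(i,j)$ entry equals $x_{((i+j)\bmod m)+1}$ if $i+j<m$ and $-x_{((i+j)\bmod m)+1}$ if $i+j\ge m$; hence $Q$ is homogeneous of degree $m$ with integer coefficients. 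Evaluating at $x=0$, where the power-series formulas give $S_0(0)=1$ and $S_l(0)=0$ for $l\ge1$, one finds that $M(0)$ has a $1$ in the top-left corner, $-1$'s on the anti-diagonal of the trailing $(m-1)\times(m-1)$ block and zeros elsewhere, so $\det M\equiv(-1)^{m(m-1)/2}=:\varepsilon_m$; in particular $Q\not\equiv0$.

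The remaining, and genuinely delicate, step is to turn the identity $Q(h_0,\dots,h_{m-1})=\varepsilon_m$ — whose naive rewriting in the $S_l$ would bring in powers of $\zeta$ — into one with integer coefficients. Here I would use the observation that every monomial appearing in $Q$ has $\zeta$-weight divisible by $m$: the term of the Leibniz expansion indexed by a permutation $\sigma$ is $\pm\prod_{i}x_{((i+\sigma(i))\bmod m)+1}$, and the sum of its subscripts satisfies $\sum_i\big((i+\sigma(i))\bmod m\big)\equiv\sum_i i+\sum_i\sigma(i)=2\binom{m}{2}\equiv0\pmod m$. Consequently the substitution $x_k\mapsto\zeta^{k-1}x_k$ leaves $Q$ unchanged, so $Q(S_0,\dots,S_{m-1})=Q(h_0,\dots,h_{m-1})=\varepsilon_m$. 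Thus $F:=Q-\varepsilon_m\in\mathbb{Z}[x_1,\dots,x_m]$ is a non-zero polynomial of total degree $m$ with $F(S_0(x),\dots,S_{m-1}(x))=0$ for all $x\in\mathbb{C}$ (the constancy of $\det M$ holds on all of $\mathbb{C}$), and for $m=2$ it recovers $\sin^2x+\cos^2x=1$. I expect the monomial/weight bookkeeping of the last paragraph to be the main obstacle; everything else is either imported from Theorem~\ref{GA} or a routine computation.
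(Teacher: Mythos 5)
Your proposal is correct and follows the same route the paper sketches: the paper gives no proof of Theorem~\ref{AIG} beyond ``specialize Theorem~\ref{GA}'' and then displays the matrix $M$ built from $f_l=\zeta^l S_l$ with $\lambda=-1$. What you add, however, is the one step that the specialization does not give for free and that the paper never addresses: Theorem~\ref{GA} only produces a relation whose coefficients are \emph{algebraic} over the $a_k$, whereas Theorem~\ref{AIG} asserts $F\in\mathbb{Z}[x_1,\dots,x_m]$. Your $\zeta$-weight computation --- each Leibniz term of $\det M$ has index sum $\sum_i\bigl((i+\sigma(i))\bmod m\bigr)\equiv 2\binom{m}{2}\equiv 0\pmod m$, so the substitution $x_k\mapsto\zeta^{k-1}x_k$ fixes $Q$ and hence $Q(h_0,\dots,h_{m-1})=Q(S_0,\dots,S_{m-1})$ --- is exactly the missing integrality argument, and it is correct; your verification that $h_l'=h_{l+1}$, $h_{m-1}'=-h_0$ (so $\lambda=-1$) is also right. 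A further point in your favour: your constant $\det M\equiv(-1)^{m(m-1)/2}$ is the correct one, while the paper's claimed value $(-1)^{m-1}$ is not --- already for $m=3$ the paper's displayed identity $-S_0^3+S_1^3-S_2^3-3S_0S_1S_2=1$ evaluates to $-1$ on the left at $x=0$ (where $S_0(0)=1$, $S_1(0)=S_2(0)=0$), consistent with your sign and not the paper's.
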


In fact by going through proof of theorem \ref{GA} we can actually obtain explicitly the polynomial $F$. Indeed, if we set:
\[
f_l(x)=\zeta^{l} S_l(x),
\]  
and construct the matrix function:
	\[
M(x)=
\begin{bmatrix}
f_0 & f_1 & f_2 & \dots & f_{m-2} & f_{m-1}\\
f_1 & f_2 & f_3 & \dots & f_{m-1} & -f_0\\
f_2 & f_3 & f_4 & \dots & -f_{0} & -f_1\\
\vdots & \vdots & \vdots & \ddots  & \vdots & \vdots\\
f_{m-1} & -f_{0} & -f_{1} & \dots &  -f_{m-3} & -f_{m-2}\\
\end{bmatrix}.
\]

Then the algebraic identity satisfied by functions $S_l$ is:
\[
\det M(x)=(-1)^{m-1}.
\]

\begin{example}
	When $m=2$ we see that:
	\[
	M=
	\begin{bmatrix}
	f_0 & f_1 \\ \\
	f_1 & -f_0 
	\end{bmatrix}
	\]
	and thus we have
	\[
	\det M=-f_0^2-f_1^2=-1,
	\]
	and this is equivalent to 
	\[
	S_0^2+S_1^2=1,
	\]
	which is just the familiar identity:
	\[
	\sin^2(x)+\cos^2(x)=1.
	\]
\end{example}

\begin{example}
	Similarly we can get the corresponding equations for other $m$ as well 
	\[
	m=3:
	-S_0^3(x)+S_1^{3}(x)-S_2^{3}(x)-3S_0(x)S_1(x)S_2(x)=1
	\]
	
	But for large $m$ the polynomial is very complicated for example when $m=7$ the corresponding polynomial will contain $246$ monomials. If we expand the functions $S_j$ into power series and compare the coefficients of each $x^n$ we can show the following result:
	
	\begin{cor}
		For any integer such that $3|n$, we can decompose $n$ into sum of three non-negative integers $n=k_1+k_2+k_3$. Then we have the following relation:
		
		\[
		\sum_{A}\frac{1}{k_1!k_2!k_3!}=3\sum_{B}\frac{1}{k_1!k_2!k_3!}
		\]
		
		where in $\sum_{A}$ we have $k_1,k_2,k_3$ are congruent modulo $3$ and in $\sum_{B}$ we have $k_1,k_2,k_3$ in three different classes modulo $3$. 
	\end{cor}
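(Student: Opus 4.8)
The plan is to read off the corollary from the $m=3$ instance of the algebraic identity by expanding into power series in $x$ and comparing the coefficient of $x^n$. The $m=3$ case of Theorem \ref{AIG} tells us that
\[
-S_0^3(x)+S_1^3(x)-S_2^3(x)-3S_0(x)S_1(x)S_2(x)
\]
is constant in $x$ (this is the explicit identity displayed above). First I would substitute the power-series formulas established above: for $m=3$ the series for $S_0$ involves only exponents $\equiv 0\pmod 3$, that for $S_1$ only exponents $\equiv 2\pmod 3$ (the numbers $3k-1$), and that for $S_2$ only exponents $\equiv 1\pmod 3$ (the numbers $3k-2$); moreover the scalar prefactors $\zeta^{-l}$ cancel out of every degree-three monomial occurring in the identity, because $\zeta^3=1$. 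Hence in the coefficient of $x^n$ the term $S_0^3$ contributes exactly the ordered triples $(k_1,k_2,k_3)$ with $k_1+k_2+k_3=n$ and all $k_i\equiv 0\pmod 3$; the term $S_1^3$ those with all $k_i\equiv 2$; the term $S_2^3$ those with all $k_i\equiv 1$; and the term $S_0S_1S_2$ those whose three entries lie in the three distinct residue classes. These four families are precisely the summation ranges of $\sum_A$, split into the three subsums $\sum_{A_0},\sum_{A_1},\sum_{A_2}$ according to the common class, and of $\sum_B$.

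The second step is the sign-and-index bookkeeping. Fix $n=3N$ with $N\ge 1$, so that the $x^n$-coefficient of the constant right-hand side is $0$. Writing each $k_i$ as $3t_i$, $3t_i-1$ or $3t_i-2$ according to its class, a monomial $x^n$ forces $t_1+t_2+t_3$ to equal $N$ in the $S_0^3$ term, $N+1$ in the $S_1^3$ term, $N+2$ in the $S_2^3$ term, and $N+1$ in the $S_0S_1S_2$ term, carrying the respective signs $(-1)^{t_1+t_2+t_3}=(-1)^N,(-1)^{N+1},(-1)^{N+2},(-1)^{N+1}$. In each term, once the sum is rewritten directly over the values $k_1,k_2,k_3$ with summand $1/(k_1!k_2!k_3!)$, the surviving sign is $\pm(-1)^N$; dividing the equation by $(-1)^N$ and rearranging collapses it to
\[
\sum_{A_0}\frac{1}{k_1!k_2!k_3!}+\sum_{A_1}\frac{1}{k_1!k_2!k_3!}+\sum_{A_2}\frac{1}{k_1!k_2!k_3!}=3\sum_{B}\frac{1}{k_1!k_2!k_3!},
\]
which is the claimed $\sum_A=3\sum_B$. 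Here $\sum_B$ is understood to count each class-distinct triple once (equivalently, it is $1/3!$ of the sum over all $3!$ orderings of such a triple, which contribute equally by the symmetry of $1/(k_1!k_2!k_3!)$), and the constant $3$ is exactly the coefficient of $S_0S_1S_2$ in the algebraic identity.

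The main obstacle I expect is simply carrying out this bookkeeping cleanly: the four terms produce sums over $t_1+t_2+t_3$ ranging over $N$, $N+1$, $N+1$ and $N+2$ rather than a single value, so one must check that the four accompanying signs all reduce to the same $\pm(-1)^N$ that then cancels throughout, and in particular that the $+S_1^3$ term — because of its shift by $1$ — lands on the same side, with the same sign, as the $-S_0^3$ and $-S_2^3$ terms, which is what puts all three $A$-subsums together. One also has to pin down the ordered-versus-unordered convention for $\sum_B$ so that the final constant is precisely $3$. Nothing here is conceptually deep; applying the same scheme to the algebraic identity for a general $m$ (using the coefficients of $\det M$) yields the analogous corollaries.
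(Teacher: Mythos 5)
Your proposal is correct and is exactly the route the paper intends: the paper offers no details beyond the instruction to expand the $m=3$ identity $-S_0^3+S_1^3-S_2^3-3S_0S_1S_2=1$ into power series and compare coefficients of $x^n$, and your sign bookkeeping (the shifts $N$, $N+1$, $N+2$, $N+1$ combining with the coefficients $-1,+1,-1,-3$ so that all three $A$-subsums land on one side with a common factor $\pm(-1)^N$) carries this out correctly for $n=3N$ with $N\geq 1$. Your remark on the ordered-$A$ versus once-per-triple-$B$ convention is the right reading needed to land on the constant $3$.
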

\end{example}

Next recall that it is well known that for $m=2$:

\[
\sin(x_1+x_2)=\sin(x_1)\cos(x_2)+\sin(x_2)\cos(x_1).
\]
It is then very natural to think if general relations hold for other $m$ as well, and this is indeed the case.

\begin{thm}[Summation formula]
	For any positive integer $m$, and any integer $l$ between $0$ and $m-1$, $ S_l(x_1+x_2)$ can be written as linear combination of $S_{l_1}(x_1)S_{l_2}(x_2)$ where $l_1,l_2$ also range over $0$ to $m-1$.
\end{thm}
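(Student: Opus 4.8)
The plan is to exploit the fact that, up to an invertible linear change of basis, the functions $S_l$ are nothing but the exponentials $E_j(x):=e^{\eta\zeta^{j}x}$, $j=0,\dots,m-1$, for which the addition formula is trivial: $E_j(x_1+x_2)=E_j(x_1)E_j(x_2)$.

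First I would make this change of basis explicit in both directions. One direction is the formula already recorded in Section \ref{SC}, namely $S_l(x)=\frac{1}{m\eta^l}\sum_{j=0}^{m-1}\zeta^{lj}E_j(x)$; the coefficient matrix $\bigl(\tfrac{1}{m\eta^l}\zeta^{lj}\bigr)_{l,j}$ is a rescaled discrete Fourier matrix, hence invertible because $\zeta$ is a primitive $m$-th root of unity. Inverting it by means of the orthogonality relation $\sum_{l=0}^{m-1}\zeta^{l(j-j')}=m$ when $j\equiv j'\pmod m$ and $0$ otherwise, one obtains $E_j(x)=\sum_{l=0}^{m-1}\eta^l\zeta^{-lj}S_l(x)$.

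Then the proof is a direct computation. Starting from $S_l(x_1+x_2)=\frac{1}{m\eta^l}\sum_{j}\zeta^{lj}E_j(x_1)E_j(x_2)$, I would substitute the inversion formula for each of $E_j(x_1)$ and $E_j(x_2)$, interchange the (finite) sums, and collapse the resulting inner sum $\sum_{j}\zeta^{(l-l_1-l_2)j}$ using orthogonality again; only the pairs $(l_1,l_2)$ with $l_1+l_2\equiv l\pmod m$ survive. Since the ranges $0\le l_1,l_2,l\le m-1$ force $l_1+l_2-l\in\{0,m\}$ and $\eta^m=-1$, this yields the clean identity
\[
S_l(x_1+x_2)=\sum_{\substack{l_1+l_2=l\\0\le l_1,l_2\le m-1}}S_{l_1}(x_1)S_{l_2}(x_2)-\sum_{\substack{l_1+l_2=l+m\\0\le l_1,l_2\le m-1}}S_{l_1}(x_1)S_{l_2}(x_2),
\]
so in fact the coefficients can be taken to be $\pm1$, which is sharper than the statement asked for; specializing $m=2$ recovers the classical formulas for $\sin(x_1+x_2)$ and $\cos(x_1+x_2)$.

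This argument is essentially routine; the only points needing care are the bookkeeping with the normalizing factor $\eta^{l}$ and the sign $\eta^{m}=-1$ (here $\eta=e^{\pi i/m}$, so that $\zeta=\eta^{2}$ as used in Section \ref{SC}). If one wished to avoid the $E_j$ entirely, an alternative would be to observe that, for fixed $x_2$, both sides as functions of $x_1$ satisfy the same order-$m$ linear ODE with constant coefficients given by the matrix $K$ from the proof of Theorem \ref{GA}, and then to match the first $m$ Taylor coefficients at $x_1=0$ using the power series theorem; but the Fourier-analytic route is shorter and produces the explicit coefficients at no extra cost.
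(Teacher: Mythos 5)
Your argument is correct, and it actually reaches a sharper conclusion than the statement requires, by a route that is recognizably different in execution from the paper's even though both run on the same engine (orthogonality of the characters $j\mapsto\zeta^{lj}$). The paper does not invert the discrete Fourier transform defining $S_l$; instead, for each $r$ it forms the character average $\frac{1}{m}\sum_{k=0}^{m-1}\zeta^{rk}S_l(x_1+\zeta^kx_2)$, shows that each such average collapses to the single product $\eta^{(l-r)_m+r-l}S_{(l-r)_m}(x_1)S_r(x_2)$, and then solves the resulting linear system in the $m$ unknowns $S_l(x_1+\zeta^kx_2)$ (whose coefficient matrix is Vandermonde, hence invertible) to extract the $k=0$ entry. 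Your substitution of $E_j(x)=\sum_l\eta^l\zeta^{-lj}S_l(x)$ into $E_j(x_1+x_2)=E_j(x_1)E_j(x_2)$ avoids introducing the rotated arguments $x_1+\zeta^kx_2$ and the solvability step altogether, and it produces the explicit identity
\[
S_l(x_1+x_2)=\sum_{l_1+l_2=l}S_{l_1}(x_1)S_{l_2}(x_2)\;-\;\sum_{l_1+l_2=l+m}S_{l_1}(x_1)S_{l_2}(x_2),
\qquad 0\le l_1,l_2\le m-1,
\]
where the paper's proof only asserts that some linear combination exists. The two answers agree: inverting the paper's Vandermonde system shows the surviving products are exactly those with $l_1+l_2\equiv l\pmod m$ and carry the sign $\eta^{l_1+l_2-l}=\pm1$, which is your formula. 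What your version buys is the closed form with coefficients $\pm1$ (immediately recovering the $\sin$ and $\cos$ addition laws at $m=2$) and a computation free of the index collisions that make the paper's displayed derivation hard to parse; what the paper's version buys is a template that, as its remark notes, transfers to the general $S^P_l$ where an explicit inverse transform is less pleasant to write down.
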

\begin{rem}
	We can also obtain summation formula for general $S^{P}_l$ functions with the same method.
\end{rem}
\begin{proof}
	Recall by definition:
	
	\[
	S_l(x)_{l=0,1,...,m-1}=\frac{1}{m\eta^l}\sum_{j=0}^{m-1}\zeta^{lj}e^{\eta\zeta^{j}x},
	\]
	where $\zeta=e^{\frac{2\pi i}{m}}$ and $\eta=e^{\frac{\pi i}{m}}$. Then observe that:
	
	\begin{eqnarray*}
		\sum_{k=0}^{k=m-1}\frac{1}{m}\zeta^{rk}S_l(x_1+\zeta^kx_2) &=& \sum_{k=0}^{m-1}\frac{1}{m^2}\zeta^{rk}\sum_{j=0}^{m-1}\frac{1}{\eta^l}\zeta^{lj}e^{\eta\zeta^jx_1}e^{\eta\zeta^{j+k}x_2}\\ \\
		&=&\sum_{j=0}^{m-1}\sum_{k=0}^{m-1}\frac{1}{m^2}\zeta^{rk}\frac{1}{\eta^l}\zeta^{lj}e^{\eta\zeta^jx}e^{\eta\zeta^{j+k}x_2}\\ \\
		&=&\frac{1}{m^2\eta^l}\sum_{j=0}^{m-1}\zeta^{lj+r(m-j)}e^{\eta\zeta^{j}x_1}\sum_{k=0}^{m-1}\zeta^{rk-r(m-j)}e^{\eta\zeta^{j+k}x_2}.
	\end{eqnarray*}
	In the sum of $k$ from $0$ to $m-1$ we can change the sum of $k$ from $-j$ to $-j+m-1$ and the result is unchanged as the terms in the sum is periodic with period $m$. The result of the sum of $k$ is then:
	
	\begin{eqnarray*}
		\sum_{k=0}^{m-1}\zeta^{rk-r(m-j)}e^{\eta\zeta^{j+k}x_2}&=&\sum_{k=0}^{m-1}\zeta^{r(k-j)-r(m-j)}e^{\eta\zeta^{j+(k-j)}x_2}\\ \\
		&=&\sum_{k=0}^{m-1}\zeta^{rk}e^{\eta\zeta^{k}x_2}\\ \\
		&=&m\eta^kS_{k}(x_2).
	\end{eqnarray*}
	
	Then we can go on computing the sum:
	\[
	\sum_{j=0}^{m-1}\zeta^{lj+r(m-j)}e^{\eta\zeta^{j}x_1}\sum_{k=0}^{m-1}\zeta^{rk-r(m-j)}e^{\eta\zeta^{j+k}x_2}=m^2\eta^{k+(l-r)_m}S_{(l-r)_m}(x_1)S_k(x_2),
	\]
	where the notation $(n)_m$ is the smallest non negative number of the form $n+km$. 
	
	In all we have:
	
	\[
	\sum_{k=0}^{k=m-1}\frac{1}{m}\zeta^{rk}S_l(x_1+\zeta^kx_2)=\eta^{k+(-r)_m}S_{(l-r)_m}(x_1)S_{k}(x_2).
	\]
	The above relation holds for any $r$, and for $r=0,1,2,...,m-1$ we got $m-1$ linear independent equations because the coefficient matrix is actually a Vandermonde matrix and it is easily seen to be non-singular. We can solve the equation and write $S_l(x_1+x_2)$ as a linear combination of $S_l(x_1)S_{l'}(x_2)$ with $l,l'$ ranging over $0,1,2...m-1$.
\end{proof}

\section{An application of generalized trigonometric functions: Evaluating series of rational function}
Let $f(n)=\frac{Q(n)}{P(n)}$ be a rational function where $P,Q$ are polynomials with complex coefficients. Then we can study the sum  
\[
\sum_{n=0}^{\infty}f(n)
\] 
We know the series converges when $\deg Q<\deg P-1$ and in this case we can use partial decomposition to write
\[
f(n)=\sum_{k=1}^{m}\frac{b_k}{(n+r_k)^{s_k}}
\]
for suitable integers $m,s_k$ and numbers $b_k,r_k$.
Then we can use polygamma function $\psi^{(m)}(z)$ which is defined to be the $m$-th derivative of the gamma function:
\[
\psi^{(m)}(z)=\frac{d^{m+1}}{dz^{m+1}}\ln\Gamma(z)
\]
and it is well known that:
\[
\psi^{(m)}(z)=(-1)^{m+1}m!\sum_{n=0}^{\infty}\frac{1}{(z+n)^{m+1}}
\]
therefore we have
\[
\sum_{n=0}^{\infty}\frac{1}{(n+z)^{m+1}}=(-1)^{m+1}\frac{\psi^{(m)}(z)}{m!}.
\]

In this way we can express any infinite sum of rational functions by linear compositions of values of polygamma functions. Further, if the rational function $f(n)$ has a partial fractional decomposition mentioned above with the coefficients $s_k=1$ and $r_k$ are rational then from the results in \cite{LH} we get the following results without polygamma functions involved:

\begin{eqnarray*}
	\sum_{n=0}^{\infty}\frac{1}{f(n)}&=&\sum_{n=0}^{\infty}\sum_{k=1}^{m}\frac{b_k}{(n+r_k)}=\sum_{n=0}^{\infty}\sum_{k=1}^{m}\frac{c_k}{(q_kn+p_k)} \\ \\
	&=&\sum_{k=1}^{m}\sum_{j=1}^{q_k-1} \frac{c_k}{q_k}(1-e^{\frac{-2\pi i j p_k}{k}})\log (1-e^{\frac{-2\pi i j }{k}}).
\end{eqnarray*}
This formula is very useful when we want to determine whether a infinite sum of rational function is transcendental by using the Baker's logarithmic linear form see \cite{ADHIKARI20011} and \cite{Pilerhood2007}.Note that in \cite{Murty2010} the same result was used to show that 'almost all' the generalized Euler constants are transcendental. 

For general rational functions we do not have such nice result but if we consider the two sided sum:

\[
\lim_{k\to\infty}\sum_{n=-k}^{k}f(n),
\]
then it is possible by using residue theorem of the following integral and letting $N\to\infty$:

\[
I(N)=\int_{R_N} \frac{\pi\cot (\pi z)}{Q(z)}dz,
\]
where $R_N$ is the rectangle with vertex $(N+0.5)(\pm 1\pm i)$, the result will be some liner combination of $\cot (\pi r_k)$ for $r_k$ being the roots of polynomial $Q$ over $\mathbb{C}$. A special case when $f(n)=n^{2k}-1$ was considered in \cite{Bundschuh1979} to show that:

\[
\sum_{|n|\geq 2}\frac{1}{n^{2k}-1}=2-\frac{\delta}{2s}-\pi i\sum_{\sigma=1,\sigma\neq s/2}^{s-1}r^\sigma\frac{e^{2\pi i r^{\sigma}}+1}{e^{2\pi i r^\sigma}-1}
\]

where $r=e^{\frac{\pi i }{s}}$ and $\delta=1$ for odd $s$ and $\delta=2$ for even $s$.

For some easy polynomials for example $f(n)=n^2$, there are several different proofs of the following fact which can be found in \cite{Chapman2003}

\[
\sum_{n=1}^{\infty}\frac{1}{n^2}=\frac{\pi^2}{6}.
\]

There are two 'standard textbook proofs', the one using the residue theorem mentioned above and the other one using Dirichlet's theorem of Fourier series. The residue method can be easily altered to compute the series of general rational function. Compare to this, finding a Fourier series argument is more difficult but such a method can lead us to a more detailed understanding of series of rational functions. Here we present a Fourier series method of computing series of rational functions as an application of later defined generalized trigonometric functions.

It turns out that the following functions will be useful:

\begin{eqnarray*}
	R^P_l(x)=\sum_{j=1}^{m}\frac{1}{e^{-ir_j\pi}-e^{ir_j\pi}}T^P_{l,j} e^{-ir_jx}
\end{eqnarray*}

It is easy to compute the Fourier series:

\begin{eqnarray*}
	c^P_{-n,l}&=&\frac{1}{2\pi}\int_{-\pi}^{\pi}R^P_l(x)e^{inx}dx \\ \\
	&=&\frac{(-1)^n}{2\pi  i}\sum_{j=1}^{m}T^P_{l,j}\frac{1}{n-r_j}\\ \\
	&=&\frac{(-1)^n}{2\pi  i P(n)}\sum_{k=0}^{m-2}\sum_{j=1}^{m}T^p_{l,j}(-1)^{m-k-1}\sum_{\text{m-k-1-tuples without j}}n^k\\ &+&\frac{(-1)^n}{2\pi  i P(n)}\sum_{j=1}^{m}T^P_{l,j}n^{m-1}\\ \\
	&=&\frac{(-1)^n}{2\pi  i P(n)}\sum_{k=0}^{m-2}\sum_{j=1}^{m}T^p_{l,j}(-1)^{m-k-1}(\sum_{\text{m-k-1-tuples}}-\sum_{\text{m-k-1-tuples with j}})n^k\\ &+&\frac{(-1)^n}{2\pi  i P(n)}\sum_{j=1}^{m}T^P_{l,j}n^{m-1}\\ \\
	&=&\frac{(-1)^n}{2\pi  i P(n)}\sum_{k=0}^{m-2}\sum_{j=1}^{m}T^p_{l,j}(-1)^{m-k-1}((-1)^{m-k-1}a_{k+1}-T^p_{m-k-1,j})n^k\\ &+&\frac{(-1)^n}{2\pi  i P(n)}\sum_{j=1}^{m}T^P_{l,j}n^{m-1}\\ \\
	&=&\sum_{k=0}^{m-1}(-1)^n\frac{C_{l,k}n^k}{ P(n)}
\end{eqnarray*}
where $C_{l,k}$ are suitable coefficients, and together they form a $m\times (m+1)$ matrix. We denote this matrix $C(P)$ as the $m\times m$ matrix by taking the first $m$ rows, namely, $C_{l,k}$ with $l,k=0,...,k-1$.  It depends only on polynomial $P$.

\begin{defn}
	For any polynomial $P\in\mathbb{C}[x]$ the matrix $C(P)$ obtained as above is called the associated matrix of $P$.
\end{defn}

Now we can use Dirichlet's theorem to obtain:
\begin{eqnarray*}
	\frac{R^P_l(\pi)+R^P_l(-\pi)}{2}&=&\sum_{n\in\mathbb{Z}}c^P_{-n,l}\\ \\
	&=&\sum_{n\in\mathbb{Z}}\sum_{k=0}^{m-1}\frac{C_{l,k}n^k}{ P(n)}\\ \\
	&=&\sum_{k=0}^{m-1}C_{l,k}\sum_{n\in\mathbb{Z}}\frac{n^k}{P(n)}
\end{eqnarray*}

As $l$ ranges over $0,1,2...m-1$ we can get $m$ linear equations of $m$ numbers

\[
\sum_{n\in\mathbb{Z}}\frac{n^k}{P(n)},k=0,1,..,m-1
\]
and similarly:

\begin{eqnarray*}
	R^P_l(0)&=&\sum_{n\in\mathbb{Z}}c^P_{-n,l}(-1)^n\\ \\
	&=&\sum_{n\in\mathbb{Z}}\sum_{k=0}^{m-1}(-1)^n\frac{C_{l,k}n^k}{ P(n)}\\ \\
	&=&\sum_{k=0}^{m-1}C_{l,k}\sum_{n\in\mathbb{Z}}(-1)^n\frac{n^k}{P(n)}.
\end{eqnarray*}
We can get $m$ linear equations with $m$ numbers:

\[
\sum_{n\in\mathbb{Z}}(-1)^n\frac{n^k}{P(n)},k=0,1,...,m-1.
\]
Therefore we get the following conclusion:

\begin{thm}
	Let $P\in\mathbb{C}[x]$. If the associated matrix $C(P)$ is non degenerate and if the following series converges:
	\[
	\sum_{n\in\mathbb{Z}}\frac{n^k}{P(n)},\sum_{n\in\mathbb{Z}}(-1)^n\frac{n^k}{P(n)},k=0,1,..,m-1
	\]
	then they are in the number field:
	\[
	\mathbb{Q}(i,\pi,\mathcal{R},\mathcal{E})
	\]
	where $\mathcal{R}$ is the set of roots of polynomial $P$ and $\mathcal{E}$ is the set consisting of $e^{-ir\pi}$ for $r$ the root of $P$. And we can find the expression explicitly.
\end{thm}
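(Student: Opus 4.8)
The plan is to turn the Fourier-analytic identities derived just above the statement into two linear systems with coefficient matrix $C(P)$ and to invert them, all the while tracking the number fields in which the coefficients live. \textbf{Step 1 (arithmetic of the data).} From the chain of equalities computing $c^P_{-n,l}$, each coefficient $C_{l,k}$ is $\frac{1}{2\pi i}$ times a $\mathbb{Z}$-linear combination of products of the roots $r_1,\dots,r_m$: the $T^P_{l,j}$ are sums of products of roots, and the $a_k$ are, up to sign, elementary symmetric functions of the roots. Hence every entry of $C(P)$ lies in $\mathbb{Q}(i,\pi,\mathcal{R})$. Since $R^P_l$ is a finite exponential sum, evaluating it directly gives
\[
R^P_l(0)=\sum_{j=1}^m\frac{T^P_{l,j}}{e^{-ir_j\pi}-e^{ir_j\pi}},\qquad R^P_l(\pm\pi)=\sum_{j=1}^m\frac{T^P_{l,j}\,e^{\mp i r_j\pi}}{e^{-ir_j\pi}-e^{ir_j\pi}}.
\]
Because $e^{ir_j\pi}=(e^{-ir_j\pi})^{-1}$ with $e^{-ir_j\pi}\in\mathcal{E}$, and because convergence of the series forces $P$ to have no integer root, so $e^{-ir_j\pi}\neq\pm1$ and the denominators are nonzero, these numbers lie in $\mathbb{Q}(\mathcal{R},\mathcal{E})$.

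\textbf{Step 2 (pointwise Fourier convergence).} Each $R^P_l$ is a finite exponential sum, hence smooth and of bounded variation on $[-\pi,\pi]$, so Dirichlet's theorem applies: its Fourier series converges at every point to the mean of the one-sided limits of the $2\pi$-periodic extension. At $x=0$ this mean equals $R^P_l(0)$; at $x=\pi$, where the periodic extension jumps, it equals $\tfrac{1}{2}(R^P_l(\pi)+R^P_l(-\pi))$. Substituting $c^P_{-n,l}=(-1)^n\sum_{k=0}^{m-1}C_{l,k}n^k/P(n)$ and using the hypothesis that the symmetric sums $\sum_{n}n^k/P(n)$ and $\sum_{n}(-1)^n n^k/P(n)$ converge, the finite $k$-sum may be pulled through the limit in $n$, which yields precisely the two displayed identities preceding the theorem, one for each $l=0,\dots,m-1$.

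\textbf{Step 3 (inversion).} These identities form two linear systems, in the unknowns $\sum_n n^k/P(n)$ and $\sum_n (-1)^n n^k/P(n)$ respectively, both with coefficient matrix $C(P)$ and with right-hand sides whose entries lie in $\mathbb{Q}(\mathcal{R},\mathcal{E})$. Since $C(P)$ is non-degenerate by hypothesis, Cramer's rule expresses each unknown as a ratio of determinants of matrices with entries in $\mathbb{Q}(i,\pi,\mathcal{R},\mathcal{E})$. This ratio is the promised explicit expression, and it exhibits each sum as an element of $\mathbb{Q}(i,\pi,\mathcal{R},\mathcal{E})$.

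The only genuinely delicate point is Step 2: one must use the boundary case of Dirichlet's theorem correctly at $x=\pm\pi$ and justify interchanging the finite summation over $k$ with the merely conditionally convergent summation over $n$. Reading the convergence hypothesis as convergence of symmetric partial sums $\lim_{N\to\infty}\sum_{n=-N}^{N}$ makes this interchange immediate, since then each inner series converges and the outer sum over $k$ is finite. Everything else is bookkeeping of field memberships together with one application of Cramer's rule.
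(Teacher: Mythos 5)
Your proof follows exactly the same route as the paper: compute the Fourier coefficients of $R^P_l$, apply Dirichlet's theorem at $x=0$ and at the jump point $x=\pm\pi$ to obtain two $m\times m$ linear systems with coefficient matrix $C(P)$, and invert using the non-degeneracy hypothesis. You in fact supply details the paper leaves implicit --- the field membership of the entries and right-hand sides, the non-vanishing of the denominators $e^{-ir_j\pi}-e^{ir_j\pi}$ (forced by the series being well defined), and the reading of convergence as symmetric partial sums --- but the underlying argument is identical.
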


\section{Examples of evaluating series of rations functions}
We first consider a special case when $P(x)=x^3+x^2+1$. Then we can write down the generalized trigonometric functions associated with $P$ as follows,

\[
R^P_l(x)=\sum_{j=1}^{3}\frac{1}{e^{-ir_j\pi}-e^{ir_j\pi}}T^P_{l,j} e^{-ir_jx}
\]
where $r_1,r_2,r_3$ are the roots of $x^3+x^2+1=0$. We can compute the $T^P_{l,j}$ explicitly:

\begin{eqnarray*}
	T^P_{0,j}&=&1\\ \\
	T^P_{1,j}&=&r_j\\ \\
	T^P_{2,j}&=&-r_j-r_j^2
\end{eqnarray*}
and now we can consider the following series:

\[
A_k=\sum_{-\infty}^{\infty}\frac{n^k}{n^3+n^2+1},B_k=\sum_{-\infty}^{\infty}(-1)^n\frac{n^k}{n^3+n^2+1}
\]
for $k=0,1,2$. The matrix $C(P)$ is
\[
C(P)=\frac{1}{2\pi i}
\begin{bmatrix}
3 & 2 & 0 \\
-1 & 0 & -3 \\
0 & 3 & 2 \\
\end{bmatrix}
\]
and we have

\[
C(P)
\begin{bmatrix}
B_2\\
B_1\\
B_0
\end{bmatrix}
=
\begin{bmatrix}
R^P_0(0)\\
R^P_1(0)\\
R^P_2(0)
\end{bmatrix}
\]
therefore we obtain:

\[
\begin{bmatrix}
B_2\\
B_1\\
B_0
\end{bmatrix}
=
C(P)^{-1}
\begin{bmatrix}
R^P_0(0)\\
R^P_1(0)\\
R^P_2(0)
\end{bmatrix}
\]
This is an explicit formula for series

\begin{eqnarray*}
	B_k=\sum_{-\infty}^{\infty}(-1)^n\frac{n^k}{n^3+n^2+1}
\end{eqnarray*}

\begin{eqnarray*}
	\sum_{-\infty}^{\infty}(-1)^n\frac{n^2}{n^3+n^2+1}&=&
	\frac{2\pi i}{31}\sum_{j=1}^{3}\frac{9-4r_j-6r_j^{-1}}{e^{-ir_j\pi}-e^{ir_j\pi}} \\ \\
	\sum_{-\infty}^{\infty}(-1)^n\frac{n}{n^3+n^2+1}&=&
	\frac{2\pi i}{31}\sum_{j=1}^{3}\frac{2+6r_j+9r_j^{-1}}{e^{-ir_j\pi}-e^{ir_j\pi}}\\ \\
	\sum_{-\infty}^{\infty}(-1)^n\frac{1}{n^3+n^2+1}&=&
	\frac{2\pi i}{31}\sum_{j=1}^{3}\frac{-3-9r_j+2r_j^{-1}}{e^{-ir_j\pi}-e^{ir_j\pi}}
\end{eqnarray*}
and similarly we can get also formulas for 

\[
A_k=\sum_{-\infty}^{\infty}\frac{n^k}{n^3+n^2+1}
\]

\begin{eqnarray*}
	\sum_{-\infty}^{\infty}\frac{n^2}{n^3+n^2+1}&=&
	\frac{\pi i}{31}\sum_{j=1}^{3}(9-4r_j-6r_j^{-1})\frac{e^{-ir_j\pi}+e^{ir_j\pi}}{e^{-ir_j\pi}-e^{ir_j\pi}} \\ \\
	\sum_{-\infty}^{\infty}\frac{n}{n^3+n^2+1}&=&
	\frac{\pi i}{31}\sum_{j=1}^{3}(2+6r_j+9r_j^{-1})\frac{e^{-ir_j\pi}+e^{ir_j\pi}}{e^{-ir_j\pi}-e^{ir_j\pi}}\\ \\
	\sum_{-\infty}^{\infty}\frac{1}{n^3+n^2+1}&=&
	\frac{\pi i}{31}\sum_{j=1}^{3}(-3-9r_j+2r_j^{-1})\frac{e^{-ir_j\pi}+e^{ir_j\pi}}{e^{-ir_j\pi}-e^{ir_j\pi}}
\end{eqnarray*}

We discussed section \ref{SC} a special class of rescaled generalized trigonometric functions $S_l$ associated with $P(x)=x^m-1$. Just as the general case, we can compute the Fourier coefficients of $S_l$ and use Dirichlet's theorem. In order to state the result, we make some temporary notations:
\begin{itemize}
	\item [1]: $
	\Delta_l=S_l(\pi)-S_l(-\pi), l\in\{0,1,\dots,m-1\}.
	$
	\item[2]: $(n)_m$ is the smallest non negative number of the form $n+km$. 
	\item[3]: $J_k^{l}=\eta^{m-1-k-l+(m-1-k+l)_m}\Delta_{(m-1-k+l)_m},k,l\in\{0,1,\dots,m-1\}$.
	\item[4]: For all natural number $n$, $M_l(n)=
	\begin{cases}
	-2i^ln^l & \text{ if $l$ is even}\\
	-2i^{m+l}n^{m+l}& \text{ if $l$ is odd}
	\end{cases}
	$
\end{itemize}

Then we have the following results:
\[
S_l(0)=-\frac{1}{2\pi}J^l_0+\frac{1}{2\pi}\sum_{n\geq 1}(-1)^n\sum_{k=0}^{m-1}(-1)^kJ^l_k\frac{M_k}{n^{2m}+1},
\]
\[
\frac{S_l(\pi)+S_l(-\pi)}{2}=-\frac{1}{2\pi}J^l_0+\frac{1}{2\pi}\sum_{n\geq 1}\sum_{k=0}^{m-1}(-1)^kJ^l_k\frac{M_k}{n^{2m}+1}.
\]
Then assume that we can solve the above $2m$ linear equations we can find the value of all the sums of form $$\sum_{n\geq 1}\frac{n^{2k}}{n^{2m}+1}(\pm 1)^n.$$

Now our problem is to show that we can indeed solve the equations, namely, the non-singularity of the following matrix:
\[
A_m[lk]=(-1)^{k+1}J^l_k(i)^{k+m(k)_2}.
\]
Here we indexed the matrix from $0$ instead of the usual convention $1$. Now it is enough to conclude the computation by showing that matrix $A_m$ is non degenerate for any positive integer $m$ (not only for odd integers).

\begin{thm}
	Matrix $A_m$ defined as above is not degenerate for any positive integer $m$.
\end{thm}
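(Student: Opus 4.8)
The plan is to reduce $A_m$ by column scaling to a skew-circulant matrix and then diagonalise it explicitly. First I would strip from each entry the part depending only on the column index $k$: writing $\delta=m-1-k+l$, one has $(\delta)_m=\delta$ or $\delta-m$ according as $l\le k$ or $l>k$, and then $m-1-k-l+(\delta)_m$ equals $2(m-1-k)$ in the first case and $m-2-2k$ in the second. Since $\eta=e^{\pi i/m}$ satisfies $\eta^{2m}=1$ and $\eta^{m}=-1$, both collapse modulo $2m$ to $\eta^{-2-2k}$ times a sign $\epsilon_{l,k}$ equal to $+1$ for $l\le k$ and $-1$ for $l>k$. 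Hence $A_m[l,k]=c_k\,\epsilon_{l,k}\,\Delta_{(m-1-k+l)_m}$ with $c_k=(-1)^{k+1}\eta^{-2-2k}i^{k+m(k)_2}\neq0$, so it suffices to prove that $B_m[l,k]:=\epsilon_{l,k}\,\Delta_{(m-1-k+l)_m}$ is nonsingular. For fixed $k$ the index $(m-1-k+l)_m$ runs once through every residue as $l$ varies, and $B_m[l,k]$ depends only on $k-l$ in an anti-periodic way; that is, $B_m$ is a skew-circulant (a $(-1)$-circulant) assembled from $\Delta_0,\dots,\Delta_{m-1}$.

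Next I would diagonalise $B_m$. Writing $B_m=\sum_{d=0}^{m-1}\Delta_{m-1-d}\,\Pi^{d}$, with $\Pi$ the skew cyclic shift whose eigenvalues are the $m$-th roots of $-1$, namely $\mu_j=\eta\zeta^{j}=e^{(2j+1)\pi i/m}$, the eigenvalues of $B_m$ are $\Lambda_j=\sum_{d}\Delta_{m-1-d}\mu_j^{d}=\mu_j^{m-1}\sum_{e}\Delta_e\mu_j^{-e}$. Substituting $\Delta_e=\frac{2}{m\eta^{e}}\sum_{t=0}^{m-1}\zeta^{et}\sinh(\pi\eta\zeta^{t})$ (immediate from the formula for $S_e$) and using $\eta^{-2}=\zeta^{-1}$ to get $\mu_j^{-e}\eta^{-e}=\zeta^{-(j+1)e}$, the orthogonality relation $\frac1m\sum_{e}\zeta^{e(t-j-1)}=[t\equiv j+1]$ collapses the double sum, giving $\Lambda_j=-2\mu_j^{-1}\sinh(\pi\mu_{j+1})$ with indices mod $m$. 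Taking the product over $j$ and using $\prod_j\mu_j=(-1)^m$ yields $\det B_m=2^{m}\prod_{j=0}^{m-1}\sinh(\pi\eta\zeta^{j})$, so $\det A_m$ equals a nonzero constant times $\prod_{j=0}^{m-1}\sinh\!\big(\pi e^{(2j+1)\pi i/m}\big)$.

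Everything therefore reduces to one nonvanishing statement: no $m$-th root of $-1$ is a purely imaginary integer. Since $\sinh z=0$ precisely for $z\in\pi i\mathbb{Z}$ and $|\eta\zeta^{j}|=1$, the product is nonzero unless some $\eta\zeta^{j}\in\{i,-i\}$, which happens exactly when $m\equiv2\pmod4$ --- and this is exactly the case in which the Fourier computation preceding the theorem is resonant ($\eta\zeta^{j}+in=0$ for $n=\pm1$), so that $\int_{-\pi}^{\pi}e^{\eta\zeta^{j}x}e^{inx}\,dx$ equals $2\pi$ instead of $\frac{(-1)^{n}2\sinh(\pi\eta\zeta^{j})}{\eta\zeta^{j}+in}$. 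Carrying that correction through the derivation replaces the vanishing factor $\sinh(\pi\eta\zeta^{j})$ by a nonzero multiple of $\cosh(\pi\eta\zeta^{j})=\cos\pi=-1$. The main obstacle is precisely the bookkeeping for this resonant class $m\equiv2\pmod4$: one must check that the corrected matrix remains a suitably perturbed skew-circulant whose eigenvalues stay nonzero. For all other $m$ the computation above already shows $\det A_m\neq0$ outright.
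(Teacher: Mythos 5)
Your reduction is essentially the paper's: you strip the column factors, reduce to a $(-1)$-circulant in the $\Delta_l$, and arrive at $\det A_m = c\cdot\prod_{j=0}^{m-1}\sinh(\pi\eta\zeta^j)$ with $c\neq 0$. The only methodological difference is the last step: you diagonalise the skew-circulant by the eigenvalues of the skew shift $\Pi$ and collapse the double sum by orthogonality, whereas the paper factors the same matrix as $m^{-m}VJ''$ with $V$ a Vandermonde matrix and $J''$ a row-scaled Vandermonde matrix, reading off the determinant as $a_0a_1\cdots a_{m-1}$ times two nonzero Vandermonde determinants, where $a_j=e^{\eta\zeta^j\pi}-e^{-\eta\zeta^j\pi}=2\sinh(\pi\eta\zeta^j)$. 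Both routes are correct and land on the identical product; your eigenvalue computation $\Lambda_j=-2\mu_j^{-1}\sinh(\pi\mu_{j+1})$ checks out.

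Where the two proofs diverge is at the final nonvanishing claim, and here you have caught a real error in the paper rather than introduced one. The paper asserts ``it is clear that $a_0a_1\cdots a_{m-1}\neq 0$,'' but as you observe this fails exactly when some $\eta\zeta^j=e^{(2j+1)\pi i/m}$ equals $\pm i$, i.e.\ when $m\equiv 2\pmod 4$. The failure is not an artifact of the method: for $m=2$ one has $S_0=\cos$, $S_1=\sin$, so $\Delta_0=\Delta_1=0$ and $A_2$ is literally the zero matrix -- the theorem as stated is false for that residue class, and no bookkeeping can rescue it without changing the definition of $A_m$. So your proof is complete and correct for $m\not\equiv 2\pmod 4$, and your diagnosis of the remaining class (the resonance $\eta\zeta^j+in=0$ at $n=\pm1$, which invalidates the Fourier-coefficient formula feeding into $A_m$) correctly identifies both why the product vanishes and what would have to be modified upstream. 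What you have not done -- and candidly say you have not done -- is carry out that modification and prove nondegeneracy of the corrected matrix; that is a genuine gap in your write-up, but it is a gap in the statement of the theorem as much as in your argument, and your proposal is strictly more honest than the paper's proof at this point.
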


\begin{proof}
	We want to show that the determinant of $A_m$ defined by following formula is non vanishing:
	\[
	A_m[lk]=(-1)^{k+1}J^l_k(i)^{k+m(k)_2}.
	\]
	We can ignore the coefficients $(-1)^{k+1}(i)^{k+m(k)_2}$ because those will just affect the determinant by multiplying $\pm 1$. We can write down the matrix explicitly:
	\[
	J_k^l=\eta^{m-1-k-l+(m-1-k+l)_m}\Delta_{(m-1-k+l)_m},
	\]
	
	\[
	J=
	\begin{bmatrix}
	\eta^{2m-2}\Delta_{m-1} & \eta^{2m-4}\Delta_{m-2} & \eta^{2m-6}\Delta_{m-3} & \dots &  \eta^{2}\Delta_{1} & \eta^{0}\Delta_{0}\\
	\eta^{m-2}\Delta_{0} & \eta^{2m-4}\Delta_{m-1} & \eta^{2m-6}\Delta_{m-2} & \dots &  \eta^{2}\Delta_{2} & \eta^{0}\Delta_{1}\\
	\eta^{m-2}\Delta_{1} & \eta^{m-4}\Delta_{0} & \eta^{2m-6}\Delta_{m-1} & \dots &  \eta^{2}\Delta_{3} & \eta^{0}\Delta_{2}\\
	\vdots & \vdots & \vdots & \ddots  & \vdots & \vdots\\
	\eta^{m-2}\Delta_{m-2} & \eta^{m-4}\Delta_{m-3} & \eta^{m-6}\Delta_{m-4} & \dots &  \eta^{2-m}\Delta_{0} & \eta^{0}\Delta_{m-1}\\
	\end{bmatrix}
	\]
	
	Then we see that we can factor out some powers of $\eta$ and this will only affect the determinant by some power of $\eta$. This power is $m^2+2+4+6+...+2m$ which is a multiple of $m$. Therefore the power of $\eta$ being factored out is actually $\pm 1$. It is then enough to show that the following matrix is non degenerate:
	
	\[
	\begin{bmatrix}
	-\Delta_{m-1} & -\Delta_{m-2} & -\Delta_{m-3} & \dots &  -\Delta_{1} & \Delta_{0}\\
	\Delta_{0} & -\Delta_{m-1} & -\Delta_{m-2} & \dots &  -\Delta_{2} & \Delta_{1}\\
	\Delta_{1} & \Delta_{0} & -\Delta_{m-1} & \dots &  -\Delta_{3} & \Delta_{2}\\
	\vdots & \vdots & \vdots & \ddots  & \vdots & \vdots\\
	\Delta_{m-2} & \Delta_{m-3} & \Delta_{m-4} & \dots &  \Delta_{0} & \Delta_{m-1}\\
	\end{bmatrix}.
	\]
	
	Now
	$$\Delta_l=\frac{1}{m\eta^l}\sum_{j=0}^{m-1}\zeta^{lj}(e^{\eta\zeta^{j}\pi}-e^{-\eta\zeta^{j}\pi})$$
	
	It is convenient to write the above matrix in terms of $\eta^l\Delta_l$:
	
	\[
	\begin{bmatrix}
	-\eta^{-(m-1)}\eta^{m-1}\Delta_{m-1} & -\eta^{-(m-2)}\eta^{m-2}\Delta_{m-2} & \dots &  -\eta^{-1}\eta^{1}\Delta_{1} & \eta^{0}\eta^{0}\Delta_{0}\\
	\eta^{0}\eta^{0}\Delta_{0} & -\eta^{-(m-1)}\eta^{m-1}\Delta_{m-1} & \dots &  -\eta^{-2}\eta^{2}\Delta_{2} & \eta^{-1}\eta^{1}\Delta_{1}\\
	\eta^{-1}\eta^{1}\Delta_{1} & \eta^{0}\eta^{0}\Delta_{0}& \dots &  -\eta^{-3}\eta^{3}\Delta_{3} & \eta^{-2}\eta^{2}\Delta_{2}\\
	\vdots & \vdots & \vdots & \ddots  & \vdots \\
	\eta^{-(m-2)}\eta^{m-2}\Delta_{m-2} & \eta^{-(m-3)}\eta^{m-3}\Delta_{m-3} & \dots &  \eta^{0}\eta^{0}\Delta_{0} & \eta^{m-1}\eta^{-(m-1)}\Delta_{m-1}\\
	\end{bmatrix}
	\]
	similarly we can factor out some powers of $\eta$ which is again $\pm 1$. So it is enough to show that the following matrix is non-degenerate:
	
	\[
	J'=
	\begin{bmatrix}
	\eta^{m-1}\Delta_{m-1} & \eta^{m-2}\Delta_{m-2} & \eta^{m-3}\Delta_{m-3} & \dots & \eta^{1}\Delta_{1} & \eta^{0}\Delta_{0}\\
	\eta^{0}\Delta_{0} & \eta^{m-1}\Delta_{m-1} & \eta^{m-2}\Delta_{m-2} & \dots & \eta^{2}\Delta_{2} & \eta^{1}\Delta_{1}\\
	\eta^{1}\Delta_{1} & \eta^{0}\Delta_{0} & \eta^{m-1}\Delta_{m-1} & \dots & \eta^{3}\Delta_{3} & \eta^{2}\Delta_{2}\\
	\vdots & \vdots & \vdots & \ddots  & \vdots & \vdots\\
	\eta^{m-2}\Delta_{m-2} & \eta^{m-3}\Delta_{m-3} & \eta^{m-4}\Delta_{m-4} & \dots &  \eta^{0}\Delta_{0} & \eta^{m-1}\Delta_{m-1}\\
	\end{bmatrix}
	\]
	
	We have the relation:
	
	$$m\eta^l\Delta_l=\sum_{j=0}^{m-1}\zeta^{lj}(e^{\eta\zeta^{j}\pi}-e^{-\eta\zeta^{j}\pi}),$$
	which actually defines a linear transformation from numbers $e^{\eta\zeta^{j}\pi}-e^{-\eta\zeta^{j}\pi}$ to numbers $m\eta^l\Delta_l$ , the matrix of this linear transformation is:
	$$\zeta^{lj}_{\{l,j=0,...m-1\}}.$$
	This is a Vandermonde matrix and the determinant is not zero because the $\zeta^l_{l=0,1,...,m-1}$ are all different.
	
	We see that:
	
	\[
	m^mJ'=VJ'',
	\]
	where $V$ is a Vandermonde matrix:
	\[
	V=
	\begin{bmatrix}
	1 & 1 & 1 & \dots & 1 & 1\\
	1 & \zeta^1 & \zeta^2 & \dots & \zeta^{m-2} & \zeta^{m-1}\\
	1 & \zeta^2 & \zeta^4 & \dots & \zeta^{2(m-2)} & \zeta^{2(m-1)}\\
	\vdots & \vdots & \vdots & \ddots  & \vdots & \vdots\\
	1 & \zeta^{m-1} & \zeta^{2(m-1)} & \dots &  \zeta^{(m-2)(m-1)} & \zeta^{(m-1)(m-1)}\\
	\end{bmatrix}
	\]
	Then after denoting $a_j=e^{\eta\zeta^{j}\pi}-e^{-\eta\zeta^{j}\pi}$ we see that:
	
	\begin{eqnarray*}
		J''&=&
		\begin{bmatrix}
			a_0 & a_0 & \dots & a_0 & a_0 & a_0\\
			\zeta^{m-1}a_1 & \zeta^{m-2}a_1 & \dots & \zeta^{2}a_1 & \zeta^{1}a_1 & a_1 \\
			\zeta^{2(m-1)}a_2 & \zeta^{2(m-2)}a_2 & \dots & \zeta^{4}a_2 & \zeta^{2}a_2 & a_2\\
			\vdots & \vdots & \vdots & \vdots  & \vdots & \vdots\\
			\zeta^{(m-1)(m-1)}a_{m-1} & \zeta^{(m-1)(m-2)}a_{m-1} & \dots &  \zeta^{2(m-1)}a_{m-1} & \zeta^{m-1}a_{m-1} & a_{m-1} \\
		\end{bmatrix}\\ \\
		&=&a_0a_1a_2...a_{m-1}
		\begin{bmatrix}
			1 & 1 & \dots & 1 & 1 & 1\\
			\zeta^{m-1} & \zeta^{m-2} & \dots & \zeta^{2} & \zeta^{1} & 1 \\
			\zeta^{2(m-1)} & \zeta^{2(m-2)} & \dots & \zeta^{4} & \zeta^{2} & 1\\
			\vdots & \vdots & \vdots & \vdots  & \vdots & \vdots\\
			\zeta^{(m-1)(m-1)} & \zeta^{(m-1)(m-2)} & \dots &  \zeta^{2(m-1)} & \zeta^{m-1} & 1 \\
		\end{bmatrix}
	\end{eqnarray*}
	
	Now it is clear that $a_0a_1...a_{m-1}\neq 0$, and all the Vandermonde matrices have non-zero determinant and the conclusion follows.
\end{proof}

  \bibliographystyle{amsalpha} 
  \bibliography{PolyReci}





\end{document}